\newtheorem{Thm}{Theorem} 
\numberwithin{equation}{section}
\renewcommand{\phi}{\varphi}
\newcommand{\C}{\mathrm{C}}
\newcommand{\N}{\mathrm{N}}
\newcommand{\Z}{\mathrm{Z}}
\newcommand{\Aut}{\operatorname{Aut}}
\newcommand{\SL}{\operatorname{SL}}
\newcommand{\cO}{\mathcal{O}}
\mathchardef\ordinarycolon\mathcode`\:  
\title{Broué's Conjecture for 2-blocks with elementary abelian defect groups of order 32}
\author{Cesare Giulio Ardito\footnote{Department of Mathematics, City University of London, Northampton Square, London, EC1V 0HB, UK, \href{mailto:cesare.ardito@city.ac.uk}{cesare.ardito@city.ac.uk}} \ and Benjamin Sambale\footnote{Institut für Algebra, Zahlentheorie und Diskrete Mathematik, Leibniz Universität Hannover, Welfengarten 1, 30167 Hannover, Germany,
\href{mailto:sambale@math.uni-hannover.de}{sambale@math.uni-hannover.de}}}
\date{\today}
\begin{document}
\frenchspacing
\maketitle
\begin{abstract}\noindent
The first author has recently classified the Morita equivalence classes of $2$-blocks $B$ of finite groups with elementary abelian defect group of order $32$. In all but three cases he proved that the Morita equivalence class determines the inertial quotient of $B$. We finish the remaining cases by utilizing the theory of lower defect groups. As a corollary, we verify Broué's Abelian Defect Group Conjecture in this situation.
\end{abstract}

\textbf{Keywords:} 2-blocks, Morita equivalence, abelian defect group, Broué's Conjecture\\
\textbf{AMS classification:} 20C05, 16D90 

Motivated by Donovan's Conjecture in modular representation theory, there has been some interest in determining the possible Morita equivalence classes of $p$-blocks $B$ of finite groups over a complete discrete valuation ring $\cO$ with a given defect group $D$. While progress in the case $p>2$ seems out of reach at the moment, quite a few papers appeared recently addressing the situation where $D$ is an abelian $2$-group. For instance, in \cite{EatonE8,EatonE16,EKKS,EL2,WuZhang} a full classification was obtained whenever $D$ is an abelian $2$-group of rank at most $3$ or $D\cong C_2^4$. Building on that, the first author determined in \cite{Ardito} the Morita equivalence classes of blocks with defect group $D\cong C_2^5$. Partial results on larger defect groups were given in \cite{Ardito64,ArditoMcKernon,McKernon}.

Since every Morita equivalence is also a derived equivalence, it is reasonable to expect that Broué's Abelian Defect Group  Conjecture for $B$ follows once all Morita equivalences have been identified. It is however not known in general whether a Morita equivalence preserves inertial quotients. In fact, there are three cases in \cite[Theorem~1.1]{Ardito} where the identification of the inertial quotient was left open. We settle these cases by making use of lower defect groups. Our notation follows \cite{habil}. All blocks are considered over $\cO$. 

\begin{Thm}\label{main}
Let $B$ be a $2$-block of a finite group $G$ with defect group $D\cong C_2^5$. Then the Morita equivalence class of $B$ determines the inertial quotient of $B$.
\end{Thm}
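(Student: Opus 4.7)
The plan is to reduce to three explicit cases using the prior classification and then distinguish the candidate inertial quotients within each case by means of a Morita-invariant refinement of the numerical data. By \cite[Theorem~1.1]{Ardito}, the Morita equivalence classes of $2$-blocks with defect group $D\cong C_2^5$ are completely classified, and in all but three classes the inertial quotient $E\leq\Aut(D)=\GL_5(2)$ is already forced by the class. It therefore suffices to handle these three exceptional classes. For each one I would first list the short collection of candidate inertial quotients $E$ compatible with the numerical invariants recorded in \cite{Ardito} (principally $k(B)$, $l(B)$, and the Cartan data), using that $E$ must act on $D\cong\FF_2^5$ and centralize a Sylow subgroup of a fixed complement.

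The tool for separating the candidates is the theory of lower defect groups. The multiplicities $m_B^{(1)}(Q)$ of subgroups $Q\leq D$ as lower defect groups of $B$ are Morita invariants, so two Morita equivalent blocks share the same sequence of multiplicities. On the other hand, since $D$ is abelian the fusion system is determined by $E$, and $m_B^{(1)}(Q)$ can be evaluated from this fusion system together with the invariants $l(b_u)$ attached to the Brauer correspondents of $B$-subsections $(u,b_u)$ with $u\in D$. Thus each candidate $E$ predicts an explicit sequence of lower defect multiplicities, and comparing these with the sequence for a known representative of the Morita class in question should isolate a unique $E$.

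Concretely, for each candidate $E$ and each $E$-orbit representative $u\in D$, I would compute $\C_E(u)$, observe that $b_u$ has defect group $\C_D(u)=D$ with inertial quotient $\C_E(u)$, and read off $k(b_u)$ and $l(b_u)$ from the small-block literature. Summing via the standard Broué--Olsson type formula yields the lower defect multiplicities attached to $E$. Each of the three Morita classes in question is represented by an explicit block (e.g.\ a principal block or a block of a small quasisimple group), so its lower defect multiplicities are known; matching them against the candidate values pins down $E$.

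The main obstacle is the bookkeeping in the third step: for each of the three classes one has to enumerate the relevant subgroups of $\GL_5(2)$, understand their actions on $D$ and the centralizers of non-identity vectors, and confirm that the resulting lower defect multiplicities genuinely differ across the candidates. Because only three Morita classes are in play, and in each the candidate list is short, this reduces to a finite but delicate case-by-case verification rather than a general theoretical statement, and that is where the real work of the paper should lie.
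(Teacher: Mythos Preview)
Your overall plan---reduce via \cite[Theorem~1.1]{Ardito} to three Morita classes and then separate the candidate inertial quotients using lower defect groups---is exactly the paper's strategy. Two points in your implementation, however, are imprecise enough to count as gaps.

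First, the assertion that the individual multiplicities $m_B^{(1)}(Q)$ are Morita invariants is not what is actually available or used. The Morita invariant is the Cartan matrix of $B$, hence its multiset of elementary divisors. The link to lower defect groups is that the multiplicity of $2$ among the elementary divisors of $C$ equals $\sum m_B^{(1)}(Q,b_Q)$, summed over $G$-conjugacy classes of $B$-subpairs $(Q,b_Q)$ with $|Q|=2$ (see \cite[Proposition~1.41]{habil}). The paper accordingly argues by contradiction: assume the ``wrong'' inertial quotient $E$ (as supplied by \cite[Corollary~5.3]{Ardito}), bound this sum from the $E$-side, and contradict the known elementary divisors on the Morita-invariant side. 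There is no attempt to match a full ``sequence of multiplicities'' across a Morita equivalence.

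Second, knowing only $l(b_u)$ for the local subsections does not suffice to control $m_B^{(1)}(Q,b_Q)$; one needs the elementary divisors of the local Cartan matrices $C_{b_Q}$. The paper passes from $(Q,b_Q)$ to the dominated block of $\C_G(Q)/Q$ of defect~$4$, invokes \cite[Proposition~16]{SambaleC4} to read off those elementary divisors, and then uses \cite[Lemma~1.42 and Eq.~(1.2)]{habil} to get $m_B^{(1)}(Q,b_Q)=m_{b_Q}^{(1)}(Q)$ bounded by the number of elementary divisors of $C_{b_Q}$ equal to $2$. Running over the $E$-orbits of order-$2$ subgroups, only one orbit contributes, giving a bound of $2$ in case~(i) and $4$ in cases~(ii)--(iii), contradicting the actual multiplicities $4$ and $14$ of $2$ among the elementary divisors of $C$. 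A ``Broué--Olsson type formula'' based on the $l(b_u)$ alone would not deliver these bounds; the Cartan data of the local blocks is the missing ingredient in your sketch.
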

\begin{proof}
By \cite[Theorem~1.1]{Ardito}, we may assume that $B$ is Morita equivalent to the principal block of one of the following groups:
\begin{enumerate}[(i)]
\item $(C_2^4\rtimes C_5)\times C_2$.
\item\label{caseb} $(C_2^4\rtimes C_{15})\times C_2$.
\item\label{casec} $\SL(2,16)\times C_2$.
\end{enumerate}
Assume the first case. The elementary divisors of the Cartan matrix $C$ of $B$ (a Morita invariant) are $2,2,2,2,32$. According to \cite[Corollary~5.3]{Ardito}, we may assume by way of contradiction that $B$ has inertial quotient $E\cong C_7\rtimes C_3$ such that $\C_D(E)=1$.
There is an $E$-invariant decomposition $D=D_1\times D_2$ where $|D_1|=4$.
Let $(Q,b)$ be a $B$-subpair such that $|Q|=2$ (i.\,e. $b$ is a Brauer correspondent of $B$ in $\C_G(Q)$). Then $b$ dominates a unique block $\overline{b}$ of $\C_G(Q)/Q$ with defect $4$. The possible Cartan matrices of such blocks have been computed in \cite[Proposition~16]{SambaleC4} up to basic sets.
If $Q\le D_1$, then $b$ has inertial quotient $\C_E(Q)\cong C_7$ (see \cite[Lemma~1.34]{habil}) and the Cartan matrix $C_b$ of $b$ has elementary divisors $4,4,4,4,4,4,32$. By \cite[Eq. (1.2) on p. 16]{habil}, the $1$-multiplicity $m^{(1)}_b(Q)$ of $Q$ as a lower defect group of $b$ is $0$. But now also $m_B^{(1)}(Q,b)=0$ by \cite[Lemma~1.42]{habil}. Similarly, if $Q\nsubseteq D_1\cup D_2$, then $b$ is nilpotent and again $m_B^{(1)}(Q,b)=0$. Finally let $Q\le D_2$. Then $b$ has inertial index $3$ and $C_b$ has elementary divisors $2,2,32$. In particular, $m^{(1)}_B(Q,b)= m^{(1)}_b(Q)\le 2$. Since all subgroups of order $2$ in $D_2$ are conjugate under $E$, the multiplicity of $2$ as an elementary divisor of $C$ is at most $2$ by \cite[Proposition~1.41]{habil}. Contradiction.

Now assume that case \eqref{caseb} or \eqref{casec} occurs. In both cases the multiplicity of $2$ as an elementary divisor of $C$ is $14$. By \cite[Corollary~5.3]{Ardito}, we may assume that $E\cong (C_7\rtimes C_3)\times C_3$. Again we have an $E$-invariant decomposition $D=D_1\times D_2$ where $|D_1|=4$. As above let $Q\le D$ with $|Q|=2$. If $Q\le D_1$, then $b$ has inertial quotient $C_7\rtimes C_3$ and the elementary divisors of $C_b$ are all divisible by $4$. Hence, $m_B^{(1)}(Q,b)=0$. If $Q\nsubseteq D_1\cup D_2$, then $b$ has inertial index $3$ and $C_b$ has elementary divisors $8,8,32$. Again, $m_B^{(1)}(B,b)=0$. Now if $Q\le D_2$, then $b$ has inertial quotient $C_3\times C_3$. Here either $l(b)=1$ or $C_b$ has elementary divisors $2,2,2,2,8,8,8,8,32$. As above we obtain $m_B^{(1)}(Q,b)\le 4$. Thus, the multiplicity of $2$ as an elementary divisor of $C$ is at most $4$. Contradiction. 
\end{proof}

Now we are in a position to prove Broué's Conjecture in the situation of \autoref{main}.

\begin{Thm}\label{broue}
Let $B$ be a $2$-block of a finite group $G$ with defect group $D\cong C_2^5$. Then $B$ is derived equivalent to its Brauer correspondent $b$ in $\N_G(D)$.
\end{Thm}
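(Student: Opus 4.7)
The plan is to combine \autoref{main} with the classification in \cite[Theorem~1.1]{Ardito}, which together provide a finite list of Morita equivalence classes of $B$ with the inertial quotient $E$ now determined in every case. First I would describe the Brauer correspondent $b$ of $B$ in $\N_G(D)$ uniformly: since $D$ is abelian and normal in $\N_G(D)$, Külshammer's theorem identifies $b$, up to Morita equivalence, with a twisted group algebra of $D\rtimes E$, and I would check that for each $E$ occurring in the list the relevant $2$-cocycle is trivial, so that $b$ is Morita equivalent to $\cO[D\rtimes E]$.

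Then I would traverse the list in \cite[Theorem~1.1]{Ardito}. The majority of the Morita classes are represented by principal blocks of groups of the form $(D\rtimes E)\times Z$ with $Z$ a $2'$-group, or are nilpotent (so that $E=1$ and Puig's theorem applies). In such cases $B$ and $b$ already lie in the same Morita equivalence class, which is in particular a derived equivalence, and there is nothing further to prove.

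The remaining classes are those whose representative involves a non-$p$-solvable direct factor, the flagship example being $\SL(2,16)\times C_2$ from case~\eqref{casec} of \autoref{main}, together with any analogous $\SL(2,32)$-type example appearing for a defect group of order $32$. For these I would appeal to Rickard's splendid tilting complex between the principal block of $\SL(2,2^n)$ and that of its Sylow normalizer $C_2^n\rtimes C_{2^n-1}$, which is the form in which Broué's Conjecture is established for $\SL(2,q)$ with $q=2^n$. Tensoring with $\cO C_2$ (or the appropriate $2'$-direct factor) propagates the equivalence to $B$ itself.

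The main obstacle I anticipate is the case-by-case bookkeeping: ensuring that every class in \cite[Theorem~1.1]{Ardito} has indeed been accounted for, either as a local block (trivially derived equivalent to its Brauer correspondent) or as one covered by a published splendid equivalence. A secondary, more technical, point is verifying triviality of the Külshammer cocycle entering the description of $b$; for the modest inertial quotients that occur with $D\cong C_2^5$ this is known in the literature, but it has to be invoked explicitly in each case.
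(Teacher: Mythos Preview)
Your outline captures the overall architecture---classify Morita classes via \cite{Ardito}, identify $b$ through Külshammer, then verify the derived equivalence case by case---but two genuine gaps remain.

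First, the assertion that the Külshammer $2$-cocycle is always trivial is not justified, and you give no mechanism to decide it. For $E\cong C_3^2$ and $E\cong(C_7\rtimes C_3)\times C_3$ the Schur multiplier of $E$ is non-trivial, and \cite[Corollary~5.3]{Ardito} records that for precisely these inertial quotients the value of $l(B)$ is \emph{not} determined by $E$ alone; the two possible values correspond to different cocycles on the local side. The paper closes this gap by first proving Alperin's Weight Conjecture $l(B)=l(b)$ for exactly these exceptional $E$ (via Watanabe's theorem and a subsection count), so that $l(b)$---and hence the cocycle governing the Morita class of $b$---is read off from the known $l(B)$. Without that step you cannot pin down which twisted group algebra $b$ is Morita equivalent to, and the comparison with $B$ cannot even begin.

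Second, and more seriously, your case division misses the hardest block in \cite[Theorem~1.1]{Ardito}: a \emph{non-principal} block of $G=(\SL(2,8)\times C_2^2)\rtimes 3^{1+2}_+$. This group is neither solvable nor a direct product with an $\SL(2,2^n)$ factor, so the ``tensor Rickard's complex with $\cO C_2$'' manoeuvre does not apply, and Craven--Rouquier's theorem on principal $2$-blocks does not cover it either. The paper handles it by using that Okuyama's splendid equivalence for the principal block of $\SL(2,8)$ extends to $\Aut(\SL(2,8))$, composing with a trivial Morita equivalence to obtain a splendid equivalence for the block of the normal subgroup $\SL(2,8)\times C_3\times A_4$, and then invoking Marcus's extension theorem \cite[Theorem~3.4]{Marcus} to lift through the index-$3$ quotient. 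This Clifford-theoretic lifting is the crux of the proof and is absent from your plan.
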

\begin{proof}
Let $E$ be the inertial quotient of $B$ (and of $b$). We first prove Alperin's Weight Conjecture for $B$, i.\,e. $l(B)=l(b)$. 
By \cite[Corollary~5.3]{Ardito}, $E$ uniquely determines $l(B)$ (and $l(b)$) unless $E\in\{C_3^2, (C_7\rtimes C_3)\times C_3\}$. Suppose first that $E=C_3^2$. Then $\C_D(E)=\langle x\rangle\cong C_2$. Let $\beta$ be a Brauer correspondent of $B$ in $\C_G(D)$ such that $b=\beta^{N}$ where $N:=\N_G(D)$. A theorem of Watanabe~\cite{Watanabe1} (see \cite[Theorem~1.39]{habil}) shows that $l(B)=l(B_x)$ where $B_x:=\beta^{\C_G(x)}$. As usual $B_x$ dominates a block $\overline{B_x}$ of $\C_G(x)/\langle x\rangle$ with defect $4$ such that $l(B_x)=l(\overline{B_x})$. Since Alperin's Conjecture holds for $2$-blocks of defect $4$ (see \cite[Theorem~13.6]{habil}), we obtain $l(\overline{B_x})=l(\overline{b_x})$ where $\overline{b_x}$ is the unique block of $\C_N(x)/\langle x\rangle$ dominated by $b_x:=\beta^{\C_N(x)}$. Hence, 
\[l(B)=l(B_x)=l(\overline{B_x})=l(\overline{b_x})=l(b_x)=l(b)\] 
as desired. 
Next, we assume that $E=(C_7\rtimes C_3)\times C_3$. Up to $G$-conjugacy there exist three non-trivial $B$-subsections $(x,B_x)$, $(y,B_y)$ and $(xy,B_{xy})$. The inertial quotients are $E(B_x)=C_3^2$, $E(B_y)=C_7\rtimes C_3$ and $E(B_{xy})=C_3$. By \cite[Corollary~5.3]{Ardito}, $l(B_y)=5$, $l(B_{xy})=3$ and $(k(B),l(B))\in\{(32,15),(16,7)\}$. Since $k(B)-l(B)=l(B_x)+l(B_y)+l(B_{xy})$, we obtain as above
\[l(B)=15\Longleftrightarrow l(B_x)=9\Longleftrightarrow l(b_x)=9\Longleftrightarrow l(b)=15.\]
This proves Alperin's Conjecture for $B$. 

Now suppose that the Morita equivalence class of $B$ is given as in \cite[Theorem~1.1]{Ardito}. Then $k(B)$ can be computed and $E$ is uniquely determined by \autoref{main}. By \cite[Corollary~5.3]{Ardito}, also the action of $E$ on $D$ is uniquely determined. 
By a theorem of Külshammer~\cite{Kuelshammer} (see \cite[Theorem~1.19]{habil}), $b$ is Morita equivalent to a twisted group algebra of $D\rtimes E$. The corresponding $2$-cocycle is determined by $l(b)=l(B)$ (see \cite[proof of Theorem~5.1]{Ardito}). Hence, we have identified the Morita equivalence class of $b$ and it suffices to check Broué's Conjecture for the blocks listed in \cite[Theorem~1.1]{Ardito}. 

For the solvable groups in that list, we have $G=N$ and $B=b$. For principal $2$-blocks, Broué's Conjecture has been shown in general by Craven and Rouquier~\cite[Theorem~4.36]{CravenRouquier}. 
Now the only remaining case in \cite[Theorem~1.1]{Ardito} is a non-principal block $B$ of 
\[G:=(\SL(2,8)\times C_2^2)\rtimes 3^{1+2}_+.\] 

As noted in \cite[Remark 3.4]{Okuyama}, the splendid derived equivalence between the principal block of $\SL(2,8)$ and its Brauer correspondent extends to a splendid derived equivalence between the principal block of $\Aut(\SL(2,8))$ and its Brauer correspondent. An explicit proof of this fact can be found in \cite[Section~6.2.1]{CravenRouquier}.
Let $M\cong \SL(2,8) \times C_3 \times A_4$ be a normal subgroup of $G$ such that $C_3\cong \Z(G) \leq M$, and let $B_M$ be the unique block of $M$ covered by $B$. By composing the derived equivalence from \cite{Okuyama} with a trivial Morita equivalence, we deduce that $B_M$ is splendid derived equivalent to its Brauer correspondent.
Using the notation of \cite[Theorem 3.4]{Marcus}, the complex that defines this equivalence extends to a complex of $\Delta$-modules, which follows from the remark above and the fact that the trivial Morita equivalence naturally extends (noting that $G/M$ stabilizes each block of $M$). Therefore, by \cite[Theorem 3.4]{Marcus}, $B$ is derived equivalent to $b$.
\end{proof}

Note that we do not prove that the derived equivalences in \autoref{broue} are splendid.

In an upcoming paper by Charles Eaton and Michael Livesey the $2$-blocks with abelian defect groups of rank at most $4$ are classified. It should then be possible to prove Broué's Conjecture for all abelian defect $2$-groups of order at most $32$. Judging from \cite{EL2} we expect that all blocks with defect group $C_4\times C_2^3$ are Morita equivalent to principal blocks. 

\section*{Acknowledgment}
We thank Michael Livesey for a very helpful discussion.
The first author is supported by the London Mathematical Society (\mbox{ECF-1920-03}). The second author is supported by the German Research Foundation (\mbox{SA 2864/1-2} and \mbox{SA 2864/3-1}).

\end{document}